\newtheorem{theorem}{Theorem}
\newtheorem{lemma}{Lemma}
\newtheorem{proposition}{Proposition}
\newtheoremstyle{neosn}{0.5\topsep}{0.5\topsep}{\rm}{}{\sc}{.}{ }{\thmname{#1}\thmnumber{ #2}\thmnote{ {\mdseries#3}}}
\theoremstyle{neosn}
\newtheorem{definition}{Definition}
\newcommand{\Rad}{\,\mathrm{Rad}\,}
\newcommand{\ad}{\,\mathrm{ad}\,}
\newcommand{\kerr}{\,\mathrm{ker}\,}
\newcommand{\GL}{\,\mathrm{GL}\,}
\newcommand{\SL}{\,\mathrm{SL}\,}
\newcommand{\PSL}{\,\mathrm{PSL}\,}
\newcommand{\Sp}{\,\mathrm{Sp}\,}
\newcommand{\calL}{{\mathcal L}}
\newcommand{\calH}{{\mathcal H}}
\newcommand{\Spin}{\,\mathrm{Spin}\,}
\newcommand{\SO}{\,\mathrm{SO}\,}
\newcommand{\Aut}{\,\mathrm{Aut}\,}
\begin{document}

\setcounter{MaxMatrixCols}{20}

\begin{center}

{\Large {\bf Automorphisms of a Chevalley group of type $\mathbf G_2$ \\

\bigskip

over a commutative ring $R$ with $1/3$  \\

\bigskip

generated by the all  invertible elements and $2R$ }}

\bigskip
\bigskip

{\large \bf E.~I.~Bunina, M.~A.~Vladykina}

\end{center}

\bigskip

\begin{center}

{\bf  Abstract.}

In this paper we prove that every automorphism of  a Chevalley group with the root system  $\mathbf G_2$ over a commutative ring $R$ with  $1/3$, generated by  all its invertible elements and  the ideal $2R$ is a composition of ring  and inner automorphisms. 

\end{center}

\bigskip

\section*{Introduction}\leavevmode

Study of automorphisms of classical groups was started by the work of Schreier and van der Warden~\cite{Schreier}  in 1928. They described all automorphisms of the group ${\PSL}_{n}$ ($n \geqslant 3$) over an arbitrary field.

Diedonne~\cite{Diedonne1} (1951) and Rickart~\cite{Rickart}  (1950) introduced the involution method, and with the help of this method described automorphisms of the group  ${\GL}_n$ ($n \geqslant 3$) over a skewfield.

The first step in construction of the automorphism theory over rings, namely, for the group ${\GL}_{n}$ ($n \geqslant 3$) over the ring of integer numbers, was made by  Hua Logen and Reiner~\cite{Hua} (1951), later some papers over commutative integral domains appeared.
The methods of the papers mentioned above were based mostly on studying involutions in the corresponding linear groups.

O'Meara~\cite{O'Meara} in 1976 invented very different (geometrical) method, which did not use involutions. By its aid, O"Meara described automorphisms of the group $\GL_n$ ($n\geqslant 3$) over domains.

In 1982 Petechuk~\cite{Petechuk1}  described automorphisms of the groups ${\GL}_n, {\SL}_n (n \geqslant 4)$ over arbitrary commutative rings. If $n = 3$, automorphisms of given linear groups are not always standard. They are standard either if in a ring $2$ is invertible, or if a ring is a domain, or it is a semisimple ring.

Isomorphisms of the groups ${GL}_{n}(R)$ and ${GL}_{m}(S)$ over arbitrary associative rings with $1/2$ for $n, m \geqslant 3$ were described in 1981 by I.Z.\,Golubchik and A.V.\,Mikhalev~\cite{Golubchik1}  and independently by E.I.\,Zelmanov~\cite{Zelmanov}. In 1997 I.Z\, Golubchik~\cite{Golubchik3} described isomorphisms between these groups for $n,m \geqslant 4$, but over arbitrary associative rings with~$1$.

In  50-th  years of the previous century Chevalley~\cite{Chevalley1}, Steinberg~\cite{Steinberg1}  and others introduced the concept of Chevalley groups
over commutative rings, which includes classical linear groups (special linear ${\SL}$, special orthogonal ${\SO}$, symplectic ${\Sp}$, spinor ${\Spin}$, and also projective groups connected with them) over commutative rings.

Clear that isomorphisms and automorphisms of Chevalley groups were also studied intensively.
The description of isomorphisms of Chevalley groups over fields was obtained by Steinberg  for the finite case~\cite{Steinberg2} and by Humphreys for the infinite one~\cite{Humphreys1}. Many papers were devoted to description of automorphisms of Chevalley groups over different commutative rings, we can mention here the papers of Borel--Tits~\cite{Borel2}, Carter--Chen Yu~\cite{Carter2}, Chen Yu~\cite{Chen1}, E. Abe~\cite{Abe1}, A.\,Klyachko~\cite{Klyachko}.

In the paper~\cite{Bunina1}  Bunina proved that automorphisms of adjoint elementary Chevalley groups with root systems $\mathbf A_{l},\mathbf D_{l},\mathbf E_{l},$ $l  \geqslant 2$, over local rings with invertible~$2$ can be represented as the composition of a ring automorphism and an automorphism-conjugation by some matrix from the normalizer of this group in ${\GL}(V)$. In the paper~\cite{Bunina3} according to the results of~\cite{Bunina1} it was shown that every automorphism of an elementary Chevalley group of the described type is standard, i.\,e.,  is represented by the composition of ring, inner, central and graph automorphisms. In the same paper the theorem describing the normalizer of Chevalley groups in their adjoint representation, which also holds for local rings without $1/2$, was obtained.

In the papers~\cite{Bunina2},~\cite{Bunina4},~\cite{Bunina5}, by the same methods it was shown that all automorphisms of Chevalley groups with the root systems $\mathbf F_4, \mathbf  G_2,\mathbf B_l$, $l \geqslant 2$, over local rings with $1/2$ (in the case $\mathbf G_2$ also with $1/3$) are standard. In the paper~\cite{Bunina6} it was proved that all automorphisms of Chevalley groups of types $ \mathbf A_{l}, \mathbf D_{l}, \mathbf E_{l}, l  \geqslant 3$, over local rings without~$1/2$, are standard.

In the paper~\cite{Bunina7} with the help of all previous results and localization method the  automorphisms of adjoint Chevalley groups over arbitrary commutative rings were described, where a root system has rank $>1$, and for $ \mathbf A_2, \mathbf F_4, \mathbf  B_l, \mathbf C_l$ the ring contains~$1/2$, for $\mathbf G_2$ the ring contains $1/2$ and $1/3$. In the recent paper~\cite{Bunina-new} this result was extended to all Chevalley groups (not only adjoint) with the same restrictions for rings.

In  the papers \cite{Bunina8}, \cite{Bunina9} E.I.\,Bunina and  P.A.\,Weryovkin  proved that every automorphism of a Chevalley group of the type $ \mathbf G_2$  over a local ring with $1/3$ and without $1/2$  is standard.

Over fields of characteristic~$2$ and the root systems~$\mathbf F_4$  there exists a non-standard automorphism of the corresponding Chevalley group  (see \cite{Steinberg1}). Over fields of characteristic~$2$ and the root systems $\mathbf B_l$ and $\mathbf C_l$ here exists a non-standard isomorphism  between the corresponding Chevalley groups (also see~\cite{Steinberg1}). In the case of local rings without $1/2$ and the root system $\mathbf A_2$ there is a non-standard automorphism  of corresponding Chevalley group too (see~\cite{Petechuk2} and~\cite{Petechuk3}). For the root system $\mathbf G_2$ non-invertible $3$  is a significant obstacle for standardity of automorphisms, since even for  fields of characteristic~$3$ a non-standard automorphism  exists (see~\cite{Steinberg1}), but as we see in the paper~\cite{Bunina9}, non-invertibility of $2$ does not interfere with standardity of automorphisms  for fields and for local rings.

 So it is a natural goal to continue the result  of standardity of automorphisms of the type $\mathbf G_2$ for  arbitrary commutative rings. At the moment, this has been done for a commutative ring with~$1/3$ generated by its invertible elements and the ideal~$2R$.

\newpage

\section{Definitions and main theorem.}\leavevmode

We fix the root system of type $\mathbf G_{2}$ with the system of simple roots $\Delta(\mathbf G_{2})=\{\alpha_{1}=e_{1}-e_{2}, \alpha_{2}= -2e_{1}+e_{2}+e_{3}\}$.

The system of positive roots :
$\mathbf G_2^{+}= \{\alpha_{1},\alpha_{2},\alpha_{1}+\alpha_{2}=e_{1}, 2\alpha_{1}+\alpha_{2},3\alpha_{1}+\alpha_{2}, 3\alpha_{1}+2\alpha_{2}\} $.

\begin{center}
\includegraphics[width=80mm]{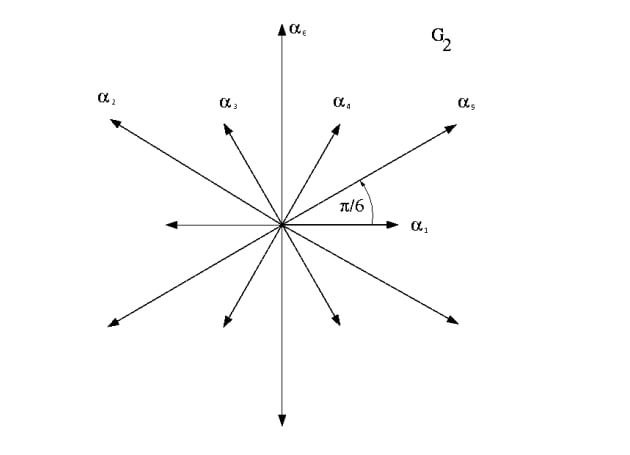}
\end{center}

More details about root systems and their properties can be found in~\cite{Bourbaki1} and~\cite{Humphreys2}.

Suppose now that we have some semisimple complex Lie algebra ${\calL}$ of type $\mathbf G_2$ with Cartan subalgebra ${\calH}$   (detailed information about semisimple Lie algebras can be found in the book~\cite{Humphreys2}).
Then we can choose a basis $\{h_1, . . . ,h_l\} \in {\calH}$ and for every $\alpha \in  \mathbf G_2$  elements $x_{\alpha} \in {\calL}_{\alpha}$ so that $\{h_i;x_{\alpha}\}$ form a basis in ${\calL}$ and for every two elements of this basis their commutator is an integral linear combination of the elements of the same basis.

Let us introduce elementary Chevalley groups (see, for example,\cite{Steinberg1}).
Let ${\calL}$ be a semisimple Lie algebra (over ${\calL}$) with a root system $\mathbf G_2, \, \pi : {\calL} \rightarrow gl_{n}(V)$ be its finitely dimensional faithful representation (of dimension~$n$). If ${\calH}$ is a Cartan subalgebra of~${\calL}$, then a functional $\lambda \in {\calH}^*$  is called a \emph{weight} of a given representation, if there exists a nonzero vector $v \in V$ (called a \emph{weight vector})    such that for any $h \in {\calH} \, \pi(h)v = \lambda(h)v$
in the space $V$ there exists a basis of weight vectors such that all operators $t^k \pi(x_{\alpha})^k/k! \quad k \in \mathbb{N}$  are written as integral (nilpotent) matrices. This basis is called a \emph{Chevalley basis}. An integral matrix also can be considered as a matrix over an arbitrary commutative ring with $1$. Let $R$ be such a ring. Consider matrices $n \times n$ over~$R$, matrices $t^k \pi(x_{\alpha})^k/k!$ for  $\alpha \in \mathbf G_2, k \in \mathbb{N}$  are included in $M_{n}(R)$.

Now consider automorphisms of the free module $\Aut(R^n)$ of the form 
$$
x_{\alpha}(t) :=\exp(tx_{\alpha}) = 1+t\pi(x_{\alpha}) + \dots + t^k \pi(x_{\alpha})^k/k! +\dots
$$ 
Since all matrices $x_{\alpha}$ are nilpotent, this series is finite. Automorphisms $x_{\alpha}(t)$ are called \emph{elementary root elements}. The subgroup $\Aut(R^n)$, generated by $x_{\alpha}(t),  \alpha \in \mathbf G_2$, is called an \emph{elementary adjoint Chevalley group}. 

The action of $x_{\alpha}(t)$ on the Chevalley basis is described in~\cite{Vavilov3}.

All weights of a given representation  generate (by addition)  a lattice  (a free Abelian group, where every $\mathbb{Z}$-basis is also a $\mathbb{C}$-basis in ${\calH}^*$), which is called the \emph{weight lattice} $\Lambda_{\pi}$.

 Elementary Chevalley groups are defined not even by a representation of the Chevalley groups,
but just by its \emph{weight lattice}. More precisely, up to an abstract
isomorphism an elementary Chevalley group is completely defined by a
root system~$\Phi$, a commutative ring~$R$ with~$1$ and a weight
lattice~$\Lambda_\pi$.

Among all lattices we mark  the lattice corresponding to the
adjoint representation, it is generated by all roots (the \emph{root
lattice}~$\Lambda_{ad}$).
  The corresponding (elementary) Chevalley group is called \emph{adjoint}.

Note that for the root system $\mathbf G_2$ there exists only one weight lattice, which is simultaneously simply connected and adjoint, therefore for every ring~$R$ there exists only one Chevalley group of the type   $\mathbf G_2$: $G(R)=G_{\ad}(\mathbf G_2,R)$. 

\bigskip

In elementary Chevalley group it is possible to consider the following important elements:

--- $w_{\alpha}(t) = x_{\alpha}(t)x_{-\alpha}(-t^{-1})x_{\alpha}(t), \; t \in R^{\star}$;

--- $h_{\alpha}(t) = w_{\alpha}(t)w_{\alpha}(1)^{-1}$.

\bigskip

Every elementary Chevalley group satisfies the following relations:

(R1) $\forall \alpha \in \mathbf G_2, \forall t,u \in R,\; x_{\alpha}(t)x_{\alpha}(u) =  x_{\alpha}(t+u); $

(R2) If $ \{\alpha, \beta\}$ are simple roots of the system $\mathbf  G_{2}$, $\alpha$ is long, $\beta$ is short, $t,u\in R$, then
\begin{align*}
\left[ x_{\alpha}(t),x_{\beta}(u) \right] &= x_{\alpha+\beta}(tu)x_{\alpha+3\beta}(-tu^{3})x_{\alpha+2\beta}(-tu^{2})x_{2\alpha+3\beta}(t^{2}u^{3}),\\
\left[ x_{\alpha+\beta}(t),x_{\beta}(u) \right] &= x_{\alpha+2\beta}(2tu)x_{\alpha+3\beta}(-3tu^{2})x_{2\alpha+3\beta}(3t^{2}u^{3});\\
\left[ x_{\alpha}(t),x_{\alpha+3\beta}(u) \right]& = x_{2\alpha+3\beta}(tu),\\
\left[ x_{\alpha+2\beta}(t),x_{\beta}(u) \right] &= x_{\alpha+3\beta}(-3tu),\\
\left[ x_{\alpha+\beta}(t),x_{\alpha+2\beta}(u) \right] &= x_{2\alpha+3\beta}(3tu);
\end{align*}

(R3) $\forall \alpha \in \mathbf G_{2} \, \, \, w_{\alpha} : = w_{\alpha}(1)$;

(R4) $\forall \alpha, \beta \in \mathbf G_{2}, \forall t \in R^{\star} \;  w_{\alpha}h_{\beta}(t)w_{\alpha}^{-1} = h_{w_{\alpha}(\beta)}(t)$;

(R5)  $\forall \alpha, \beta \in \mathbf G_{2} \, \, \, \forall t \in R \;  w_{\alpha}x_{\beta}(t)w_{\alpha}^{-1} = x_{w_{\alpha}(\beta)}(ct);$, where $c = c(\alpha,\beta) = \pm 1$;

(R6)  $\forall \alpha, \beta \in \mathbf G_{2} \, \, \, \forall t \in R^{\star} \; \forall u \in R \;  h_{\alpha}(t)x_{\beta}(u)h_{\alpha}(t)^{-1} = x_{\beta}(t^{\langle \beta, \alpha \rangle} u)$.

\medskip

\bigskip

Introduce now Chevalley groups. For more details see \cite{Steinberg1},
\cite{Chevalley1}, \cite{Vavilov3}, \cite{Demazure}, \cite{Borel1}, \cite{Carter1}, \cite{Vavilov1}.

Consider semisimple linear algebraic groups over algebraically
closed fields. These are precisely elementary Chevalley groups
$E_\pi(\Phi,K)$.

All these groups are defined in $\SL_n(K)$ as  common set of zeros of
polynomials of matrix entries $a_{ij}$ with integer coefficients
 It is clear now that multiplication and
taking inverse element are  defined by polynomials with integer
coefficients. Therefore, these polynomials can be considered as
polynomials over an arbitrary commutative ring with a unit. Let some
elementary Chevalley group $E$ over~$\mathbb C$ be defined in
$\SL_n(\mathbb C)$ by polynomials $p_1(a_{ij}),\dots, p_m(a_{ij})$.
For a commutative ring~$R$ with a unit let us consider the group
$$
G(R)=\{ (a_{ij})\in \SL_n(R)\mid \widetilde p_1(a_{ij})=0,\dots
,\widetilde p_m(a_{ij})=0\},
$$
where  $\widetilde p_1(\dots),\dots \widetilde p_m(\dots)$ are
polynomials having the same coefficients as
$p_1(\dots),\dots,p_m(\dots)$, but considered over~$R$.

This group is called \emph{the Chevalley group} $G_{\ad}(\mathbf G_{2},R)$ of the type $\mathbf G_2$ over the ring~$R$, and it coincides with the elementary Chevalley group for every algebraically
closed field~$K$.

\bigskip

Define two  types of automorphisms of a Chevalley group $G_{\ad}(\mathbf G_2,R)$, we call them \emph{standard}.

{\bf Ring automorphisms.} Let $\rho: R\to R$ be an automorphism of
the ring~$R$. The mapping $(a_{i,j})\mapsto (\rho (a_{i,j}))$ from $G_{\ad}(\mathbf G_2,R)$
onto itself is an automorphism of the group $G_{\ad}(\mathbf G_2,R)$, 
denoted by the same letter~$\rho$. It is called a \emph{ring
automorphism} of the group~$G_{\ad}(\mathbf G_2,R)$. Note that for all
$\alpha\in \mathbf G_2$ and $t\in R$ an element $x_\alpha(t)$ is mapped to
$x_\alpha(\rho(t))$.

{\bf Inner automorphisms.} Let $S$ be some ring containing~$R$,  $g$
be an element of $G_{\ad}(\mathbf G_2,S)$, that normalizes the subgroup $G_{\ad}(\mathbf G_2,R)$. Then
the mapping $x\mapsto gxg^{-1}$  is an automorphism
of the group~$G_{\ad}(\mathbf G_2,R)$, denoted by $i_g$.  It is called an
\emph{inner automorphism}, \emph{induced by the element}~$g\in G_{\ad}(\mathbf G_2,S)$. If $g\in G_{\ad}(\mathbf G_2,R)$, then we call $i_g$ a \emph{strictly inner}
automorphism.

These two automorphisms are called \emph{standard}. Also central and graph automorphisms are called standard,
but in the case under consideration nontrivial  central and graph  automorphisms do not exist, therefore we will call an automorphism of a group $G(R)$ \emph{standard}, if it is a composition of two introduced types of automorphisms.

Our goal is to prove the following theorem:

\begin{theorem}\label{main}
Let $G=G_{\ad}(\mathbf G_{2},R)$ or $G=E_{\ad}(\mathbf G_{2},R)$
be a Chevalley group of the type~$\mathbf G_2$ or its elementary subgroup,  $R$ be a commutative ring with~$1/3$,  generated by its invertible elements and the ideal~$2R$. Then any automorphism of the group~$G$
is standard, i.\,e., is a composition of a ring and strictly inner automorphisms.
\end{theorem}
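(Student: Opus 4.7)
The strategy is a localization-and-gluing argument, following the template used by Bunina in~\cite{Bunina7} for rings containing $1/2$, but with the hypothesis ``$R$ is generated by $R^{*}$ and $2R$'' taking over the role played there by the invertibility of~$2$. Given an automorphism $\varphi$ of $G = G_{\ad}(\mathbf G_{2},R)$, the plan is to extend $\varphi$ to each localization $G(R_{\mathfrak m})$, apply the already-established local standardness theorems, and then glue the resulting data into a single ring-plus-inner decomposition over~$R$.

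First, for every maximal ideal $\mathfrak m\subset R$ I would use the presentation of $G$ by the generators $x_{\alpha}(t)$ subject to the relations (R1)--(R6) to produce an induced automorphism $\varphi_{\mathfrak m}$ of $G(R_{\mathfrak m})$. This requires a separate technical argument (a ``weak extension'' lemma) showing that $\varphi$ respects enough of the Bruhat/torus structure that it passes through the localization homomorphism $G(R)\to G(R_{\mathfrak m})$; such lemmas are standard in the Chevalley-group automorphism literature and I would import the version from~\cite{Bunina7}. Next, since $R_{\mathfrak m}$ contains $1/3$, two subcases arise: if $2\notin\mathfrak m$ then $R_{\mathfrak m}$ also contains $1/2$ and \cite{Bunina5} yields $\varphi_{\mathfrak m}=i_{g_{\mathfrak m}}\circ\rho_{\mathfrak m}$; if $2\in\mathfrak m$ then the same shape of decomposition is supplied by \cite{Bunina8,Bunina9}. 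The generating hypothesis on~$R$ is exactly what is needed so that \emph{every} $\mathfrak m$ falls into one of these two already-treated local regimes.

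The core of the argument is then the gluing. I would first attack the ring-automorphism coordinates $\rho_{\mathfrak m}$: on the subset $R^{*}\subset R_{\mathfrak m}^{*}$, the values $\rho_{\mathfrak m}(u)$ are forced to be compatible across different~$\mathfrak m$ because they are read off from the action of $\varphi$ on the torus elements $h_{\alpha}(u)$ via relation~(R6), which does not depend on the choice of localization. This yields a well-defined map $\rho\colon R^{*}\to R^{*}$. To extend $\rho$ to all of $R$ I would use the hypothesis that $R$ is generated by $R^{*}$ and~$2R$: any $r\in R$ is a polynomial expression in units and elements of~$2R$, so once $\rho$ is also shown to carry $2R$ to $2R$ consistently (via its action on the root unipotents $x_{\alpha+2\beta}(2tu)$ appearing in relation~(R2), which involves precisely the coefficient~$2$), it extends uniquely to a ring automorphism of~$R$. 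After replacing $\varphi$ by $\rho^{-1}\circ\varphi$, every $\varphi_{\mathfrak m}$ is inner, and a second gluing step assembles the conjugators $g_{\mathfrak m}$ into a single element normalizing $G(R)$ inside the Chevalley group over an appropriate overring (for instance the total ring of fractions, or $\prod_{\mathfrak m} R_{\mathfrak m}$), using uniqueness of the inner factor modulo the (trivial) centre.

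The main obstacle is the ring-gluing in the paragraph above. Two things have to be controlled simultaneously: the behaviour of the local $\rho_{\mathfrak m}$ on $R^{*}$ (which is intrinsic and should pose little trouble), and the behaviour on the ideal $2R$, where the lack of $1/2$ means one cannot recover $\rho$ by halving coefficients. This is exactly where the generating assumption ``$R = \mathbb Z[R^{*}]+2R$'' is indispensable: it guarantees that a ring map of $R$ is determined by its restrictions to these two pieces, so consistency of the local data on each piece suffices to produce a global~$\rho$. A secondary but nontrivial issue is to verify that the conjugating element $g$ produced from the $g_{\mathfrak m}$ actually lies in (a normalizer of) $G(R)$ rather than merely in the overring's Chevalley group, which requires a descent argument using the fact that $\varphi$ was defined on $G(R)$ to begin with.
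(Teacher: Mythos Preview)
Your proposal has a genuine gap at the very first step: you cannot, in general, extend an automorphism $\varphi$ of $G(R)$ to an automorphism $\varphi_{\mathfrak m}$ of $G(R_{\mathfrak m})$, and \cite{Bunina7} does not contain such a ``weak extension'' lemma. The localization map $G(R)\to G(R_{\mathfrak m})$ is neither injective nor surjective in general, so pushing $\varphi$ forward would require you already to know essentially what you are trying to prove. What the paper (and \cite{Bunina7}) actually does is different: using the normal-structure theorem one shows $\varphi(N_I)=N_J$ for maximal ideals, which induces an isomorphism $\overline\varphi\colon E(R/I)\to E(R/J)$ of Chevalley groups over \emph{fields}; Steinberg's theorem then gives $\overline\varphi=i_{\overline g}\circ\rho$, one lifts $\overline g$ to $g\in G(R_J)$, and the cited technical result of \cite{Bunina8} (about systems $\{x_\alpha'\}$ congruent to $\{x_\alpha(1)\}$ modulo the radical) produces a further conjugator so that $r_J\varphi(x_\alpha(1))=i_{g_J}(x_\alpha(1))$ exactly.

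The order of the decomposition is also reversed relative to the paper, and this matters. The paper first assembles a single element $g=\prod_J g_J\in G(S)$ with $\varphi(x_\alpha(1))=g\,x_\alpha(1)\,g^{-1}$ for all $\alpha$, and only then has to show that $i_g$ preserves $G(R)$. This is exactly where the hypothesis ``$R$ is generated by $R^{*}$ and $2R$'' enters, and very concretely: by explicit identities among $x_\beta(\pm1)$, $x_\beta(2)$ and $h_\alpha(t)$ (using $1/3$) one expresses $X_\beta$, $X_\beta^2$, $X_\beta^3/3$ as $R$-combinations of matrices already known to conjugate into $M_N(R)$, whence $g\,x_\beta(t)\,g^{-1}\in M_N(R)$ for every $t\in R^{*}\cup 2R$, hence for all $t\in R$. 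After that, $\rho:=i_{g^{-1}}\circ\varphi$ is an honest automorphism of $G(R)$ fixing every $x_\alpha(1)$, and a direct argument (centralizers of $\Gamma_\alpha$, Weyl conjugation, and the commutator formula (R2)) shows $\rho$ is a ring automorphism. Your plan to glue the local ring maps $\rho_{\mathfrak m}$ first runs into the problem that each $\rho_{\mathfrak m}$ is an automorphism of $R_{\mathfrak m}$, not of $R$, so reading off values on $h_\alpha(u)$ gives elements of $R_{\mathfrak m}^{*}$ with no a priori reason to lie in~$R$ or to be compatible; the paper sidesteps this entirely by peeling off the inner part first.
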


\bigskip

\section{Notions and theorems, necessary for the proof.}

\subsection{Localization of rings and modules; injection of a ring into the product of its localizations.}\leavevmode

\begin{definition}  Let $R$ be a commutative ring. A subset $Y\subset R$ is called \emph{multiplicatively closed} in~$R$, if $1\in Y$ and $Y$ is closed under multiplication.
\end{definition}

Introduce  an equivalence relation $\sim$ on the set of pairs $R\times Y$ as follows:
$$
\frac{a}{s}\sim \frac{b}{t} \Longleftrightarrow \exists u\in Y:\ (at-bs)u=0.
$$
  By $\frac{a}{s}$ we denote the whole equivalence class of the pair $(a,s)$, by $Y^{-1}R$ we denote the set of all equivalence classes. On the set $S^{-1}R$ we can introduce the ring structure by
$$
\frac{a}{s}+\frac{b}{t}=\frac{at+bs}{st},\quad \frac{a}{s}\cdot \frac{b}{t}=\frac{ab}{st}.
$$

\begin{definition}
The ring $Y^{-1}R$ is called the \emph{ring of fractions of~$R$ with respect to~$Y$}.
\end{definition}

 Let $\mathfrak p$ be a prime ideal of~$R$. Then the set $Y=R\setminus {\mathfrak p}$ is multiplicatively closed (it is equivalent to the definition of the prime ideal). We will denote the ring of fractions  $Y^{-1}R$ in this case by $R_{\mathfrak p}$. The elements $\frac{a}{s}$, $a\in \mathfrak p$, form an ideal $\mathfrak M$ in~$R_{\mathfrak p}$. If $\frac{b}{t}\notin \mathfrak M$, then $b\in Y$, therefore $\frac{b}{t}$ is invertible in~$R_{\mathfrak p}$. Consequently the ideal $\mathfrak M$ consists of all non-invertible elements of the ring~$R_{\mathfrak p}$, i.\,e., $\mathfrak M$ is the greatest ideal of this ring, so $R_{\mathfrak p}$ is a local ring.

The process of passing from~$R$ to~$R_{\mathfrak p}$ is called  \emph{localization at~${\mathfrak p}$.}

The construction $S^{-1}A$ can be easily carried trough  with an  $A$-module~$M$.
 Let $m/s$ denote the equivalence class of the pair $(m,s)$, the set $S^{-1}M$ of all such fractions    is made as a module $S^{-1}M$ with obvious operations of addition and scalar multiplication. As above we will write $M_{\mathfrak p}$ instead of $S^{-1}M$ for $S=A\setminus {\mathfrak p}$, where $\mathfrak p$ is a prime ideal of~$A$.

\begin{proposition}\label{inlocal} 
Every commutative ring  $R$ with $1$ can be naturally embedded in the cartesian product of all its localizations  by maximal ideals
$$
S=\prod\limits_{{\mathfrak m}\text{  is a maximal ideal of }A} A_{\mathfrak m}
$$
by diagonal mapping, which corresponds every $a\in R$ to the element
$$
\prod\limits_{\mathfrak m} \left( \frac{a}{1}\right)_{\mathfrak m}
$$
 of the ring~$S$.
\end{proposition}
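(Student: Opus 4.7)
The plan is to establish the two basic properties of the diagonal map simultaneously: (i) it is a ring homomorphism, and (ii) it is injective. Well-definedness and the homomorphism property are essentially automatic, since the localization map $\ell_{\mathfrak m}\colon R\to R_{\mathfrak m}$, $a\mapsto a/1$, is a ring homomorphism for each maximal ideal $\mathfrak m$, and the diagonal into a product of homomorphisms is again a homomorphism. So the entire content lies in injectivity.

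For injectivity, I would argue through the annihilator. Suppose $a\in R$ lies in the kernel, i.\,e.\ $a/1=0_{R_{\mathfrak m}}$ for every maximal ideal $\mathfrak m$ of $R$. By the definition of the equivalence relation $\sim$ on $R\times (R\setminus\mathfrak m)$, for each such $\mathfrak m$ there exists some $s_{\mathfrak m}\in R\setminus\mathfrak m$ with $s_{\mathfrak m}\cdot a=0$ in $R$. Consider the annihilator
\[
\Ann(a)=\{r\in R\mid ra=0\},
\]
which is an ideal of $R$. The preceding sentence says precisely that for every maximal ideal $\mathfrak m$ one has $s_{\mathfrak m}\in\Ann(a)\setminus\mathfrak m$; in other words, $\Ann(a)\not\subseteq\mathfrak m$ for any maximal $\mathfrak m$.

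The conclusion then follows from the standard fact (a Zorn-type argument applied to the ring with $1$) that every proper ideal of $R$ is contained in some maximal ideal. Since $\Ann(a)$ is not contained in any maximal ideal, it cannot be proper, so $\Ann(a)=R$. In particular $1\in\Ann(a)$, whence $a=1\cdot a=0$. This shows that the kernel of the diagonal map is trivial and completes the proof.

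I do not expect any real obstacle here: the argument is the textbook proof that the functor $M\mapsto\prod_{\mathfrak m}M_{\mathfrak m}$ detects zero on $R$-modules (applied to $M=R$). The only care needed is to keep straight that the product ranges over \emph{maximal} ideals (not all primes), which is exactly what lets one invoke the Zorn-type maximality lemma in the last step.
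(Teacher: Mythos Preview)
Your argument is correct and is the standard commutative-algebra proof: the diagonal map is a ring homomorphism because each $\ell_{\mathfrak m}$ is, and injectivity follows since $a/1=0$ in every $R_{\mathfrak m}$ forces $\Ann(a)\not\subseteq\mathfrak m$ for all maximal $\mathfrak m$, whence $\Ann(a)=R$ and $a=0$. The paper itself does not supply a proof of this proposition --- it is stated as a known background fact from commutative algebra --- so there is nothing to compare; your write-up fills the gap cleanly.
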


\subsection{Isomorphisms of Chevalley groups over fields.}\leavevmode

We will need the description of isomorphisms between Chevalley groups over fields.

\begin{theorem}[see Theorems~30  and~31 from \cite{Steinberg1}]\label{isom_fields}
 Let $G$, $G'$ be  Chevalley groups, constructed  with root system $\mathbf G_2$ and fields $k,k'$, respectively. Suppose that $k,k'$ have characteristics not equal to three,  $\varphi: G\to G'$ be a group isomorphism. Then  the fields $k$ and $k'$ are isomorphic, and the isomorphism $\varphi$ is a composition of a ring isomorphism between $G$ and $G'$, and  inner automorphism of~$G'$.
\end{theorem}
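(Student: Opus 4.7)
The plan is to follow Steinberg's classical approach for isomorphism theorems of Chevalley groups over fields. The first step is to recover the Bruhat structure intrinsically. One shows that, after composition with a suitable inner automorphism of $G'$, one may assume $\varphi(U)=U'$, $\varphi(U^-)=U'^-$, and consequently $\varphi(T)=T'$, where $U,U^-$ are the subgroups generated by the positive/negative root elements $x_\alpha(t)$ and $T$ is the diagonal (toral) subgroup. The idea is to give a group-theoretic characterization of a maximal unipotent subgroup $U$: over a field $k$ of characteristic $p>0$, $U$ is (up to conjugacy) a maximal $p$-subgroup of $G$; over characteristic $0$, one uses that $U$ is generated by the nontrivial elements whose $\ad$-action is nilpotent, i.e.\ by unipotent elements in the adjoint representation. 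The conjugacy of maximal such subgroups allows us to transport $\varphi(U)$ to $U'$ by conjugation in $G'$.

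Once $\varphi$ preserves the triple $(U,T,U^-)$, each root subgroup $U_\alpha \subset U$ (characterized as a minimal $T$-normalized subgroup of $U$) must map to some $U'_{\sigma(\alpha)}$ for a bijection $\sigma$ of the root system $\mathbf G_2$ that is compatible with addition of roots and with the Weyl group action $N(T)/T \to N(T')/T'$. Hence $\sigma$ is induced by an automorphism of the Dynkin diagram of $\mathbf G_2$, combined with an element of the Weyl group. The Weyl group contribution can be absorbed by an additional inner conjugation by a representative $n \in N(T')$. The remaining possibility is the nontrivial diagram automorphism of $\mathbf G_2$ (swapping long and short roots); this lifts to a group automorphism only in characteristic $3$, so under the hypothesis $\charr k, \charr k' \neq 3$ this contribution must vanish, leaving $\sigma=\mathrm{id}$.

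With $\varphi$ preserving every root subgroup, it induces bijections $\phi_\alpha \colon k \to k'$ via $x_\alpha(t)\mapsto x'_\alpha(\phi_\alpha(t))$. Relation (R1) gives $\phi_\alpha(t+u)=\phi_\alpha(t)+\phi_\alpha(u)$, so each $\phi_\alpha$ is additive. Applying $\varphi$ to the commutator identities in (R2) — for instance $[x_\alpha(t),x_\beta(u)] = x_{\alpha+\beta}(tu)x_{\alpha+3\beta}(-tu^3)x_{\alpha+2\beta}(-tu^2)x_{2\alpha+3\beta}(t^2u^3)$ — and comparing coordinates in each root component, one obtains multiplicative identities such as $\phi_{\alpha+\beta}(tu)=\phi_\alpha(t)\phi_\beta(u)$ and $\phi_{\alpha+3\beta}(tu^3)=\phi_\alpha(t)\phi_\beta(u)^3$. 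Using relation (R6), $h_\alpha(t)x_\beta(u)h_\alpha(t)^{-1}=x_\beta(t^{\langle\beta,\alpha\rangle}u)$, together with the Weyl-group-compatible action on the torus, one can absorb the indeterminacy of $\phi_\alpha$ on a fixed short and a fixed long simple root by a final diagonal conjugation, forcing a single field isomorphism $\phi \colon k \to k'$ such that $\phi_\alpha=\phi$ for every $\alpha$. This exhibits $\varphi$ as the composition of the ring isomorphism induced by $\phi$ with inner automorphisms of $G'$.

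The main obstacle is the first step: the purely group-theoretic identification of $U$ (and then $T$) up to conjugacy, which is what allows us to reduce to the Bruhat-preserving case. In positive characteristic this uses Sylow-type arguments on maximal unipotent subgroups; in characteristic $0$ one must argue more carefully using centralizers of regular semisimple elements or the commutator structure of solvable subgroups. A secondary subtlety is ruling out the graph automorphism of $\mathbf G_2$: although the diagram has a symmetry interchanging long and short roots, it does not lift to an automorphism of the Chevalley group unless $\charr = 3$, and justifying this requires inspecting the exotic isogeny that realizes it only in characteristic $3$. Once these two points are handled, the remainder is a direct bookkeeping calculation with the explicit commutator and torus relations written above.
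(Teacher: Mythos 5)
The paper does not prove this statement at all: it is imported as Theorems~30 and~31 of Steinberg's \emph{Lectures on Chevalley groups}, so there is no in-paper argument to measure your proposal against. Your outline is in fact a faithful reconstruction of Steinberg's own strategy --- normalize $\varphi$ by an inner automorphism so that it preserves $U$, $U^-$ and $T$, deduce a bijection $\sigma$ of the root system from the induced permutation of the $T$-stable root subgroups, rule out the long/short swap outside characteristic~$3$, and then extract a field isomorphism from the relations (R1), (R2), (R6), absorbing the residual diagonal indeterminacy (which for $\mathbf G_2$ is inner, since the root and weight lattices coincide, so Steinberg's general conclusion ``field $\cdot$ graph $\cdot$ diagonal $\cdot$ inner'' does specialize to ``ring $\cdot$ inner''). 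As a roadmap this is the correct one.

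As a proof, however, it is incomplete, and you flag the gap yourself: everything hinges on the purely group-theoretic identification of a maximal unipotent subgroup and on the conjugacy of all such subgroups, which you assert but do not establish. In characteristic $p$ the Sylow-type argument needs the precise fact that the maximal $p$-subgroups of $G(k)$ are exactly the conjugates of $U$; this is where Steinberg imposes hypotheses on the fields (his Theorem~30 is stated for perfect fields, a hypothesis the paper's formulation silently drops), and in characteristic~$0$ the characterization of unipotent elements without reference to a chosen representation is genuinely delicate. Likewise, the claim that the long/short symmetry of the $\mathbf G_2$ diagram lifts to a group isomorphism only in characteristic~$3$ is exactly the content of the special isogeny and needs an argument or a citation, not just an appeal to plausibility. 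None of this makes your plan wrong --- it is Steinberg's plan --- but as written it is a summary of a known proof with its two hardest steps deferred, which is precisely why the paper cites Theorems~30--31 rather than reproving them.
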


\subsection{The subgroup $E_{\ad}(\mathbf G_2,R)$ is characteristic in the $G_{\ad}(\mathbf G_2,R)$}\leavevmode

\begin{definition}
A subgroup $H$ of the group  $G$ is called \emph{characteristic}, if it it is mapped into itself under any automorphism of group~$G$.
In particular, any characteristic subgroup is normal.
\end{definition}

\begin{theorem}[\cite{Vaserstein}]\label{character}
If the rank of  an indecomposable  root system~$\Phi$ is more than one, then  the elementary group $E_{\ad}(G_{2},R)$ is characteristic in the Chevalley group $G_{\ad}(G_{2},R)$.
\end{theorem}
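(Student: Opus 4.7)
The plan is to identify the elementary subgroup with the commutator subgroup,
$$E_{\ad}(\mathbf G_2,R)\;=\;[G_{\ad}(\mathbf G_2,R),G_{\ad}(\mathbf G_2,R)].$$
Since the commutator subgroup of any group is automatically invariant under every automorphism, such an identification gives the theorem at once.

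For the inclusion $E\subseteq [G,G]$ I would prove that $E$ is perfect, by realising each generator $x_\gamma(s)$ as a product of commutators from $E$. The Chevalley commutator relations (R2) already furnish direct identities such as $[x_\alpha(t),x_{\alpha+3\beta}(u)]=x_{2\alpha+3\beta}(tu)$ and $[x_{\alpha+2\beta}(t),x_\beta(u)]=x_{\alpha+3\beta}(-3tu)$, from which every non-simple-root generator can be extracted by inverting the triangular system in (R2). For the simple-root generators $x_\alpha(s)$ themselves I would use (R6): since $\langle\alpha,\alpha\rangle=2$, one has $[h_\alpha(t),x_\alpha(u)]=x_\alpha((t^2-1)u)$, and the hypothesis that $R$ is generated by its invertible elements (together with $1/3\in R$) lets one combine such expressions to recover every $x_\alpha(v)$, and analogously every $x_\beta(v)$, as a product of commutators; conjugation by the Weyl elements $w_\delta$ then covers the negative root subgroups via (R5).

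For the harder inclusion $[G,G]\subseteq E$ I would first invoke the Taddei--Vaserstein normality of $E$ in $G$ (valid in rank $\geq 2$), so that $G/E$ is a well-defined group, and then argue that this quotient is abelian. The standard route is a Stein-type $K_1$-stability argument: at each maximal ideal $\mathfrak m$ the localisation $G_{\ad}(\mathbf G_2,R_\mathfrak m)$ already equals $E_{\ad}(\mathbf G_2,R_\mathfrak m)$, because $R_\mathfrak m$ is local and the unique $\mathbf G_2$ weight lattice is simultaneously simply connected and adjoint; using the injection $R\hookrightarrow\prod_{\mathfrak m}R_\mathfrak m$ of Proposition~\ref{inlocal} together with a Bruhat-decomposition analysis relative to the standard torus and its unipotent radical, one concludes that commutators of elements of $G(R)$ already lie in $E(R)$.

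The main obstacle is exactly this second inclusion. The commutator formulas (R1)--(R6) alone handle perfectness and, with more combinatorial effort, the normality step, but the fact that $G/E$ is genuinely abelian is a true input from the $K_1$-theory of Chevalley groups and cannot be read off from manipulations inside a single root subsystem of $\mathbf G_2$. Once $E=[G,G]$ is in place, characteristicity is a one-line consequence of the fact that any automorphism permutes commutators.
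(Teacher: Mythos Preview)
The paper does not prove this theorem at all: it is quoted verbatim from Vaserstein's paper \cite{Vaserstein} and used as a black box, so there is no ``paper's own proof'' to compare your proposal against.

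That said, your strategy is the right one and is essentially how the result is usually deduced from Vaserstein's normal-structure theorem: once one knows $E_{\ad}(\mathbf G_2,R)=[G_{\ad}(\mathbf G_2,R),G_{\ad}(\mathbf G_2,R)]$, characteristicity is immediate. Two points in your execution deserve correction, however.

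First, in the perfectness direction you import the hypotheses ``$1/3\in R$'' and ``$R$ is generated by its invertible elements'', which belong to Theorem~\ref{main} and are \emph{not} hypotheses of the present statement. Perfectness of $E(\Phi,R)$ for $\mathrm{rk}\,\Phi\geqslant 2$ holds over every commutative ring: for $\mathbf G_2$ one gets each long-root generator from $[x_\alpha(t),x_{\alpha+3\beta}(u)]=x_{2\alpha+3\beta}(tu)$ and Weyl conjugation, and then each short-root generator $x_{\alpha+\beta}(s)$ can be isolated from the first relation in (R2) after stripping off the already-expressed long and short factors on the right. No appeal to $h_\alpha(t)$, and hence no hypothesis on units of $R$, is needed.

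Second, your localisation sketch for $[G,G]\subseteq E$ does not work as written. Knowing $G_{\ad}(\mathbf G_2,R_{\mathfrak m})=E_{\ad}(\mathbf G_2,R_{\mathfrak m})$ at every maximal ideal and embedding $R\hookrightarrow\prod_{\mathfrak m}R_{\mathfrak m}$ does not by itself force commutators in $G(R)$ into $E(R)$: the functor $E$ does not commute with products or satisfy descent, so elementary factorisations over the various $R_{\mathfrak m}$ need not glue to one over $R$. What is actually required is exactly the content of Vaserstein's sandwich theorem (quoted in the paper as the Abe--Suzuki/Vaserstein normal-structure result): applied to the normal subgroup $[G,G]$ it yields a level ideal which, because $[G,G]\supseteq E$, must be $R$ itself, whence $[G,G]=E$. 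You correctly flag this as ``a true input from the $K_1$-theory of Chevalley groups''; just be aware that your Bruhat/localisation paragraph is not a substitute for it.
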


\subsection{Normal structure of Chevalley groups over commutative rings.}\leavevmode

If $R$ is a ring, $I$  is its ideal, then by $\lambda_I: G_{\ad}(\mathbf G_{2},R)\to G_{\ad}(\mathbf G_{2},R/I)$ ($E_{\ad}(\mathbf G_{2},R)\to E_{\ad}(\mathbf G_{2},R/I)$) we denote a homomorphism, obtained by corresponding  every matrix $A\in G_{\ad}(\mathbf G_{2},R)$ to its image under the natural homomorphism $R\to R/I$.

We denote the inverse image of the center of $G_{\ad}(R/I)$ under   the homomorphism  $\lambda_I$ by $Z_I$

\begin{theorem} (\cite{Abe6}) If a subgroup ${\calH}$ of $E_{\ad}(\mathbf G_2,R)$ is normal in $E_{\ad}(\mathbf G_2,R)$ then
$$
E_{\ad}(\mathbf G_2,R,I) \leq {\calH} \leq Z_{\ad}(\mathbf G_2,R,I) \cap E_{\ad}(\mathbf G_2,R) 
$$
for some uniquely defined ideal $I$ of the ring $R$.
\end{theorem}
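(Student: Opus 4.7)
The plan is to associate to $\mathcal{H}$ a canonical ideal $I\trianglelefteq R$ (its \emph{level}) and then prove the two inclusions separately. The natural choice is
$$
I = \bigl\langle t\in R : x_\alpha(t)\in\mathcal{H} \text{ for some } \alpha\in\mathbf G_2\bigr\rangle,
$$
and a first small check is that restricting $\alpha$ to long roots, short roots, or both gives the same ideal. Transitivity of the Weyl group on roots of a fixed length, together with the mixed-length commutator formulas in (R2) — which under $1/3\in R$ allow passage between long-root and short-root elements — takes care of this.

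For the lower inclusion $E_{\ad}(\mathbf G_2,R,I)\leq\mathcal{H}$, I would show that $x_\alpha(t)\in\mathcal{H}$ for every root $\alpha$ and every $t\in I$. Once that is done, $\mathcal{H}$ contains all the generators of $E_{\ad}(\mathbf G_2,R,I)$ and is closed under $E_{\ad}(\mathbf G_2,R)$-conjugation, so it contains the whole relative elementary subgroup. To produce such $x_\alpha(t)$, start from a defining generator $x_\beta(s)\in\mathcal{H}$ of the level, conjugate it by $h_\gamma(u)\in E_{\ad}(\mathbf G_2,R)$ to scale $s$ by $u^{\langle\beta,\gamma\rangle}$ via (R6), and then commute with suitable $x_\delta(v)$'s to land in neighbouring root subgroups via (R2). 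Iterating, and using normality to strip off unwanted higher-weight factors, one exhausts every $x_\alpha(t)$ with $t\in I$.

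For the upper inclusion, push $\mathcal{H}$ through $\lambda_I : E_{\ad}(\mathbf G_2,R)\to E_{\ad}(\mathbf G_2,R/I)$. By the very definition of $I$ the image $\bar{\mathcal H}$ meets no root subgroup nontrivially. If some $\bar g\in\bar{\mathcal H}$ failed to centralise an elementary generator $x_\gamma(\bar v)$, then the commutator $[\bar g, x_\gamma(\bar v)]\in\bar{\mathcal H}$, together with a further round of the conjugation-and-extraction manipulations of the preceding paragraph, would yield a nonzero root element in $\bar{\mathcal H}$ — contradiction. Hence $\bar{\mathcal H}$ centralises $E_{\ad}(\mathbf G_2,R/I)$, so it lies in its centre and $\mathcal H\leq Z_{\ad}(\mathbf G_2,R,I)\cap E_{\ad}(\mathbf G_2,R)$. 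Uniqueness of $I$ is automatic: both inclusions force $I$ to coincide with the level ideal defined above.

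The main obstacle is the extraction step hidden in both directions. In $\mathbf G_2$ the relation (R2) for a long root $\alpha$ and a short root $\beta$ produces up to four root subgroup factors simultaneously, with coefficients that are polynomials in $t,u$ of degrees up to three. To separate a single factor, one must invert certain small integer coefficients — in particular the $3$'s that appear in $-tu^3$, $-3tu^2$, $3tu$ and $3t^2u^3$ — and this is exactly where the hypothesis $1/3\in R$ becomes indispensable. Its indispensability is confirmed by the characteristic-$3$ counterexamples recalled in the introduction, where the analogous normal structure statement genuinely fails.
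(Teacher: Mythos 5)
The paper does not actually prove this statement: it is quoted from Abe--Suzuki \cite{Abe6} and used as a black box, so there is no internal argument to compare yours against. Judged on its own, your sketch reproduces the correct classical skeleton (attach to $\mathcal{H}$ its level ideal $I$, obtain $E_{\ad}(\mathbf G_2,R,I)\leq\mathcal{H}$ by moving parameters around with $h_\gamma(u)$-conjugation and the commutator formulas, then obtain the upper bound by passing to $R/I$), and your remarks on Weyl-group transitivity, on the independence of the level from the root length, on where $1/3$ enters when separating the factors of (R2), and on uniqueness of $I$ are all sound.

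The genuine gap is the step you describe as ``a further round of the conjugation-and-extraction manipulations of the preceding paragraph.'' In the lower inclusion the elements you manipulate are already products of explicit root elements, so (R2) together with normality really does let you strip off unwanted factors. In the upper inclusion, however, $\bar g$ is an \emph{arbitrary} element of $E_{\ad}(\mathbf G_2,R/I)$, and the assertion that a noncentral $\bar g$ can be converted by commutation into a nontrivial root element of $\lambda_I(\mathcal{H})$ is precisely the hard content of the theorem, not a consequence of (R2). Over a field one gets it from the Bruhat decomposition; over a general quotient $R/I$ one needs either localization at maximal ideals together with a patching argument, or the Chevalley--Matsumoto-type decompositions that Abe--Suzuki actually use, and none of that machinery appears in your sketch. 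A second, smaller gap sits in the sentence ``by the very definition of $I$ the image $\bar{\mathcal H}$ meets no root subgroup nontrivially'': an element $h\in\mathcal{H}$ with $\lambda_I(h)=x_\alpha(\bar s)$ need not itself be a root element $x_\alpha(s)$, so this does not follow from the definition of the level; it again requires the extraction step. Relatedly, in the lower inclusion conjugation by $h_\gamma(u)$ only rescales parameters by powers of units, so passing from a level generator $t$ to an arbitrary multiple $rt$ must rest entirely on the commutator route, which deserves to be said explicitly rather than folded into ``iterating.''
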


Note, that the center of Chevalley group of type $\mathbf G_2$ is trivial.

\begin{definition}
Let  $N_I$ denotes the subgroup  $\kerr \lambda_I\cap E(\mathbf G_{2},R)$.
 \end{definition}

\begin{proposition}\label{p4_1}(see \cite{Bunina7})
Let $\varphi$ be an arbitrary automorphism of a group $E(\mathbf G_{2},R)$, $I$ be a maximal ideal of the ring~$R$. Then there exists a maximal ideal  $J$ of the ring~$R$ such that $\varphi (N_I)=N_J$.
\end{proposition}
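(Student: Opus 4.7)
The plan is to exploit the normal structure theorem of Abe cited just above, together with the simplicity of elementary Chevalley groups over fields. First note that $N_I$ is a normal subgroup of $E(\mathbf G_2,R)$ because it is the kernel of the restriction of the reduction homomorphism $\lambda_I$ to $E(\mathbf G_2,R)$. Since $\varphi$ is an automorphism, $\varphi(N_I)$ is likewise normal in $E(\mathbf G_2,R)$. By Abe's theorem, there is a uniquely determined ideal $J\trianglelefteq R$ with
$$
E_{\ad}(\mathbf G_2,R,J)\le \varphi(N_I)\le Z_{\ad}(\mathbf G_2,R,J)\cap E_{\ad}(\mathbf G_2,R).
$$
Since the center of the Chevalley group of type $\mathbf G_2$ is trivial (as explicitly noted after the theorem of Abe), we have $Z_{\ad}(\mathbf G_2,R,J)=\ker\lambda_J$, so the right-hand side is precisely $N_J$.

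Next I would show that $N_I$ is a \emph{maximal} proper normal subgroup of $E(\mathbf G_2,R)$. Because $I$ is maximal, $R/I$ is a field; since $1/3\in R$, its characteristic is not $3$. The map $\lambda_I$ sends $E(\mathbf G_2,R)$ onto $E_{\ad}(\mathbf G_2,R/I)$, so $E(\mathbf G_2,R)/N_I\cong E_{\ad}(\mathbf G_2,R/I)$, and the latter is a simple group (elementary Chevalley groups of type $\mathbf G_2$ over fields are simple, the center being trivial). Consequently $N_I$ is a maximal proper normal subgroup of $E(\mathbf G_2,R)$, and therefore so is its image $\varphi(N_I)$.

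Combining the two observations, from $\varphi(N_I)\le N_J\le E(\mathbf G_2,R)$ and the maximality of $\varphi(N_I)$, either $\varphi(N_I)=N_J$ or $N_J=E(\mathbf G_2,R)$. The latter would force $J=R$ and $\varphi(N_I)=E(\mathbf G_2,R)$, which is impossible since $\varphi$ is an automorphism and $N_I$ is a proper subgroup. Hence $\varphi(N_I)=N_J$. Finally, if $J$ were not maximal, picking a maximal ideal $J'\supsetneq J$ would give a proper normal subgroup $N_{J'}$ strictly containing $\varphi(N_I)=N_J$, again contradicting the maximality of $\varphi(N_I)$. Thus $J$ is maximal and the proposition is proved.

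The main obstacle I expect is the verification that $E(\mathbf G_2,R)/N_I$ is genuinely simple: one must confirm that $\lambda_I$ maps $E(\mathbf G_2,R)$ \emph{onto} $E_{\ad}(\mathbf G_2,R/I)$ (not merely into it), and then invoke (or carefully cite) simplicity of $E_{\ad}(\mathbf G_2,k)$ for a field $k$ of characteristic different from $3$, including the awkward small-field cases such as $k=\mathbb F_2$. Once this is granted, the rest of the argument is formal and consists of a routine comparison of the normal subgroups $N_J\le N_{J'}$ associated with nested ideals.
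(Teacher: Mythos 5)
Your reduction to Abe's normal structure theorem is the right starting move, and the identification of the right-hand side of the sandwich with $N_J$ (via triviality of the center) is fine, as is the observation that $\lambda_I$ maps $E(\mathbf G_2,R)$ onto $E_{\ad}(\mathbf G_2,R/I)$ because it sends generators to generators. The genuine gap is exactly the point you deferred to the end: the simplicity of $E_{\ad}(\mathbf G_2,R/I)$. For $R/I\cong\mathbb F_2$ the group $G_2(2)$ is \emph{not} simple: its derived subgroup has index $2$ and is isomorphic to $\mathrm{PSU}_3(3)$. The hypotheses of this paper only require $1/3\in R$ --- the entire point is to allow $2$ to be non-invertible --- so residue fields $R/I\cong\mathbb F_2$ are a live case, and then $N_I$ is \emph{not} a maximal proper normal subgroup of $E(\mathbf G_2,R)$ (the preimage of $G_2(2)'$ sits strictly between $N_I$ and the whole group). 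Your maximality argument therefore fails at both places where it is used: to force $\varphi(N_I)=N_J$ and to force $J$ to be maximal. This is not a verification detail that can be patched along your route; the statement you need is simply false for that residue field.

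The argument in the cited source avoids simplicity altogether. From Abe's theorem, $E_{\ad}(\mathbf G_2,R,J)\le\varphi(N_I)\le N_J$ for some ideal $J\ne R$; choose a maximal ideal $J'\supseteq J$, so that $\varphi(N_I)\le N_J\le N_{J'}$ and hence $N_I\le\varphi^{-1}(N_{J'})$. Applying Abe's theorem once more, now to the normal subgroup $\varphi^{-1}(N_{J'})$, gives $\varphi^{-1}(N_{J'})\le N_{I'}$ for some ideal $I'\ne R$, whence $N_I\le N_{I'}$. The ideal attached to a congruence subgroup is recovered from the root elements it contains ($x_\alpha(t)\in N_I$ if and only if $t\in I$), so $N_I\le N_{I'}$ forces $I\subseteq I'$, and maximality of the \emph{ideal} $I$ (rather than of the subgroup $N_I$) gives $I=I'$. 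Then $N_I\le\varphi^{-1}(N_{J'})\le N_{I'}=N_I$ yields $\varphi(N_I)=N_{J'}$ with $J'$ maximal. If you want to keep your write-up, replace the simplicity step by this two-sided application of the normal structure theorem; everything else you wrote survives.
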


\section{Formulation of main steps of the proof.}\leavevmode

Consider a ring $R$ and its maximal ideal~$I$. We denote the localization  $R$ with~$I$ by~$R_I$, and its radical  by ${\Rad} R_I$.  Note, that there are two isomorphic fields:  $R/I$ and $R_I/{\Rad} R_I$:
$$
\begin{CD}
R @>>> R_I\\
@V\lambda_I VV @VV \lambda_{\Rad R_I}V\\
R/I @>\mu_I >> R_I/\Rad R_I
\end{CD}
$$

Let now  $\varphi$  be an arbitrary automorphism of $E_{\ad}(\mathbf G_2,R)$. Proposition  \ref{p4_1} gives us a possibility to consider the commutative diagram
 \begin{equation}\label{diagramm}
   \begin{CD}
   E_{\ad}(\mathbf G_2,R) @> \varphi >>  E_{\ad}(\mathbf G_2,R)\\
   @V r_I VV @VVr_J V\\
   E_{\ad}(\mathbf G_2,R_I) @. E_{\ad}(\mathbf G_2,R_J)\\
   @V \lambda_{\Rad R_I}VV @VV \lambda_{\Rad R_J}V\\
   E_{\ad}(\mathbf G_2,R_I/\Rad R_I) @. E_{\ad}(\mathbf G_2,R_J /\Rad R_J)\\
   @V s_I VV @VV s_J V \\
   E_{\ad} (\mathbf G_2,R/I) @> \overline \varphi >> E_{\ad}(\mathbf G_2,R/J)
   \end{CD}
   \end{equation}

The groups $E_{\ad}(\mathbf G_{2},R/I)$ and $E_{\ad}(\mathbf G_{2},R/J)$  are just elementary Chevalley groups over fields, their isomorphism were described in  Theorem~\ref{isom_fields}.

Recall, that the  fields $R/I$ and $R/J$ are isomorphic (as above let us  denote the corresponding isomorphism by~$\rho$),  and 
$$
\overline \varphi (A) = i_g \circ \rho(A)\ \forall A\in E_{\ad}(G_{2},R/I),\quad  g\in G_{\ad}(\mathbf G_2, R/J)
$$

The description of automorphisms of $E(\mathbf G_{2},R)$ can be done by the following scheme. the ring $R$ is embedded into the ring  $S=\prod R_I$: the Cartesian product of all local rings~$R_I$, obtained by localization of~$R$ by different maximal ideals~$I$.

Clear that $E(\mathbf G_{2},R)$ is embedded into
    $$
    G(\mathbf G_{2},S)=G(\mathbf G_{2},\prod R_I).
    $$

{\bf First step.}
    
    We prove, that for every maximal ideal~$J$  we have
    $$
    r_J  \varphi (x_\alpha(1))=i_{g_J} r_J(x_\alpha(1)) ,
    $$
    where $g_J\in G_{\ad}(\mathbf G_2,\overline{R_J})$ (an extesion of $R_J$).

\bigskip

{\bf Second step.}

We prove, that an inner automorphism of the group $G_{\ad}(\mathbf G_2,S)$, generated by $g=\prod g_J$, induces an automorphism of the group~$G_{\ad}(\mathbf G_2,R)$ and in fact  is  strictly inner.

Further, we show, that if we take a composition of the initial automorphism and the inner automorphism $i_{g^{-1}}$, the obtained automorphism is a ring one.

\bigskip

\section{Proof of the first step.}\leavevmode

Consider an arbitrary element $x_\alpha(1)\in E_{\ad} (\mathbf G_2, R)$, $\alpha\in \mathbf G_2$. Its image under the mapping $r_I$  is  $x_\alpha (1)=x_\alpha (1/1)\in E_{\ad} (\mathbf G_2, R_I)$. In the field $R/I$ its image has the same form. The element $x_\alpha'=\varphi(x_\alpha(1))\in E_{\ad} (\mathbf G_2, R)$ under the factorization by the ideal~$J$ gives $\overline \varphi(x_\alpha(1))=i_{\overline g}  (x_\alpha (1))$, where $\overline g\in G_{\ad}(\mathbf G_2,R/J)$.

Now choose $g\in G_{\ad}(\mathbf G_2,R_J)$ such that under the factorization of ring $R_J$  by its radical  $g$ is mapped to $\overline g$.

Then consider the following mapping $\psi: E_{\ad} (\mathbf G_{2}, R)\to E_{\ad} (\mathbf G_{2}, R_J)$:
$$
\psi = i_{g^{-1}} \circ r_J\circ \varphi.
$$
Under the mapping $\psi$ all $x_\alpha (1)$, $\alpha\in \mathbf G_{2}$ are mapped to $x_\alpha'$ such that  $x_\alpha(1)-x_\alpha'\in M_N(\Rad R_J)$.

Therefore, we now have a set of elements $\{ x_\alpha' \mid \alpha \in \mathbf G_{2}\}\subset E_{\ad} (\mathbf G_{2},R_J)$, satisfying the same relations as $\{ x_\alpha (1)\mid \alpha \in \mathbf G_{2}\}$, and equivalent to the corresponding $x_\alpha(1)$  modulo radical~$R_J$.

It is precisely the situation of the paper~\cite{Bunina8}, in which for a local ring~$R$  and the root system $\mathbf G_2$ if $3\in R^*$ without any additional conditions for the ring it is proved, that  if in the group $E_{\ad} (\mathbf G_{2}, R)$ some elements $x_\alpha'$ are the images  of the corresponding  $x_\alpha(1)$, $\alpha\in \mathbf G_{2}$, and also $x_\alpha(1)-x_\alpha'\in M_N(\Rad R)$, then there exists  $g'\in G_{\ad} (\mathbf G_{2}, R)$ , $g'-E\in M_N(\Rad R)$ such that for any $\alpha\in \mathbf G_{2}$
$$
x_\alpha(1)= i_{g'} (x_\alpha').
$$

Thus, the first step of the theorem completely follows from the statements of the previous paragraph. $\square$

Now embedding the initial ring  $R$ into the ring $S=\prod\limits_J R_J$, we see that
$$
\varphi(x_\alpha(1))=g (x_\alpha(1)) g^{-1},
$$
where $g=\prod\limits_J g_J$.

\bigskip

\section{Proof of the second step of the theorem.}\leavevmode

Conjugation by an element $g \in G_{\ad} (\mathbf G_2, S)$ can be extended up to the automorphism of the whole matrix ring $M_N (S)$.

Let us prove the following lemma:

\begin{lemma}\label{generate} 
Any matrix $x_{\alpha}(t),\alpha \in \mathbf G_2$, $t\in R^*$ or $t\in 2R$, under  conjugation by $g$ is mapped into a matrix from $\GL_{N}(R)$.
\end{lemma}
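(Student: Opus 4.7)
The plan is to handle the two hypotheses on $t$ separately, in each case reducing to the already-established case $t=1$ (and, by (R1), to integer $t$): indeed $g x_\alpha(n) g^{-1}=\varphi(x_\alpha(1))^n\in \GL_N(R)$ for any $n\in\mathbb Z$.

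For $t\in R^*$: in $\mathbf G_2$, for every root $\alpha$ one can choose a root $\beta$ of the same length (at angle $\pi/3$ or $2\pi/3$) with Cartan integer $\langle\alpha,\beta\rangle=\pm 1$. By (R6),
$$
x_\alpha(t)=h_\beta(s)\,x_\alpha(1)\,h_\beta(s)^{-1}\qquad\text{for some }s\in R^*,
$$
so $g x_\alpha(t) g^{-1}=(g h_\beta(s) g^{-1})\,\varphi(x_\alpha(1))\,(g h_\beta(s) g^{-1})^{-1}$ and the problem reduces to showing $g h_\beta(s) g^{-1}\in \GL_N(R)$ for every $s\in R^*$ and $\beta\in \mathbf G_2$. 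Since $h_\beta(s)\in E_{\ad}(\mathbf G_2,R)$, we have $\varphi(h_\beta(s))\in \GL_N(R)$, and it suffices to prove the stronger identity $g h_\beta(s) g^{-1}=\varphi(h_\beta(s))$. I would obtain this by forming the quotient $a:=g h_\beta(s) g^{-1}\cdot\varphi(h_\beta(s))^{-1}\in G_{\ad}(\mathbf G_2, S)$, showing via (R6) and the first step that $a$ centralizes each $\varphi(x_\gamma(1))=g x_\gamma(1) g^{-1}$, and then invoking the Zariski density of $g\cdot E_{\ad}(\mathbf G_2,\mathbb Z)\cdot g^{-1}$ in the adjoint group scheme $G_{\ad}(\mathbf G_2,\cdot)$ together with the triviality of the center of the adjoint group of type $\mathbf G_2$ to conclude $a=1$.

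For $t\in 2R$: write $t=2r$ and apply the second line of (R2),
$$
[x_{\alpha+\beta}(1),x_\beta(r)]=x_{\alpha+2\beta}(2r)\,x_{\alpha+3\beta}(-3r^2)\,x_{2\alpha+3\beta}(3r^3).
$$
The three factors on the right pairwise commute (no two of $\alpha+2\beta,\alpha+3\beta,2\alpha+3\beta$ sum to a root of $\mathbf G_2$), so $x_{\alpha+2\beta}(2r)$ can be isolated as the commutator multiplied by inverses of the two tails. Conjugating by $g$ yields $[\varphi(x_{\alpha+\beta}(1)), g x_\beta(r) g^{-1}]$ times conjugates of tail factors whose arguments $-3r^2,3r^3$ lie in $R$ and are handled either by Case~1 (if $r\in R^*$) or by additively decomposing $r$ as a sum of elements from $R^*$ and $2R$ (permitted by the hypothesis on $R$) and using (R1). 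The remaining positive roots are reached through the other four commutator identities of (R2) applied analogously.

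The chief difficulty is the toral identity $g h_\beta(s) g^{-1}=\varphi(h_\beta(s))$ of Case~$R^*$. The centralizer-and-Zariski-density argument is standard for a single field or local ring but must be pushed through the product ring $S=\prod_J R_J$, probably by reducing to each factor $R_J$ (where the results of \cite{Bunina8} apply) and patching; this coordination of local information with the centerless adjoint group structure is the step I expect to be the most delicate.
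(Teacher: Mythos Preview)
Your approach has a genuine circularity in both cases. In the $R^*$ case, to show that $a=g h_\beta(s) g^{-1}\cdot\varphi(h_\beta(s))^{-1}$ centralizes $\varphi(x_\gamma(1))=g x_\gamma(1) g^{-1}$ you must compare $g h_\beta(s) x_\gamma(1) h_\beta(s)^{-1} g^{-1}=g\,x_\gamma(s^{\langle\gamma,\beta\rangle})\,g^{-1}$ with $\varphi(h_\beta(s))\varphi(x_\gamma(1))\varphi(h_\beta(s))^{-1}=\varphi(x_\gamma(s^{\langle\gamma,\beta\rangle}))$; equality of these is exactly the unproved statement $g\,x_\gamma(u)\,g^{-1}=\varphi(x_\gamma(u))$ for a non-integer $u$, which the first step gives only for $u=1$ (hence for $u\in\mathbb Z$). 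The roots $\gamma$ with $\langle\gamma,\beta\rangle=0$ are too few to run a density argument. In the $2R$ case you isolate $x_{\alpha+2\beta}(2r)$ from $[x_{\alpha+\beta}(1),x_\beta(r)]$, but conjugating the commutator by $g$ produces $g\,x_\beta(r)\,g^{-1}$ with $\beta$ short and $r\in R$ arbitrary; writing $r=\sum u_i+\sum 2v_j$ and using (R1) reintroduces factors $x_\beta(2v_j)$, which are precisely the short-root, $2R$-argument elements you are trying to control. The tails have the same problem unless long roots are already handled for all $t\in R$, which you have not established.

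The paper avoids all of this by stepping outside the group into the matrix ring $M_N(S)$, where conjugation by $g$ is a ring automorphism and therefore respects any $\mathbb Z[1/3]$-polynomial identity among matrices. For a long root $\alpha$ one writes $X_\alpha$ (the nilpotent part) as a polynomial in the $x_\gamma(1)$'s via $(x_\gamma(1)x_\delta(1)-x_\gamma(1)-x_\delta(1)+E)^2=\pm X_\alpha^2/2$ with $\gamma+\delta=\alpha$; this forces $gX_\alpha g^{-1}\in M_N(R)$ and hence $g\,x_\alpha(t)\,g^{-1}\in M_N(R)$ for \emph{every} $t\in R$ when $\alpha$ is long. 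That immediately gives $g\,h_\alpha(t)\,g^{-1}\in M_N(R)$ for long $\alpha$ and $t\in R^*$ (no centralizer or density argument needed), and then (R6) with a long $\alpha$ satisfying $\langle\beta,\alpha\rangle=1$ handles short $\beta$ with $t\in R^*$. For short $\beta$ and $t\in 2R$ the paper again works in $M_N(S)$, expressing $2X_\beta$, $X_\beta^2$, $X_\beta^3/3$ as $\mathbb Z[1/3]$-polynomials in $x_\beta(\pm1),x_\beta(2)$ so that $x_\beta(2r)=E+r(2X_\beta)+2r^2X_\beta^2+4r^3(X_\beta^3/3)$ conjugates into $M_N(R)$. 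The missing idea in your attempt is this passage to the enveloping matrix ring; staying inside the group forces you into the circular reductions above.
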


\begin{proof}
For $x_{\alpha}(1)$,  where $ \alpha \in \mathbf G_{2}$,  this statement is true (see the first step).

Let $\alpha$ --- be a long root, $\alpha \in \mathbf G_2$. For the long roots let us consider
$$
x_{\alpha}(1) = E + X_{\alpha} + \frac{X_{\alpha}^2}{2}, \quad
$$
where
$
\frac{X_{\alpha}^2}{2} = E_{(\alpha,-\alpha)}.
$
Lets us take two long roots  $\gamma$, $\beta \in \mathbf G_{2}$  such that $\gamma + \beta = \alpha$.
Note that 
$$
(x_{\gamma}(1)x_{\beta}(1)-x_{\gamma}(1)-x_{\beta}(1)+E)^2 = \pm E_{(\alpha,-\alpha)},
$$
it easily gives~$X_{\alpha}$.

Then any matrix  $x_\alpha (t)$ under the conjugation by~$g$ is mapped into  a matrix from $M_{N}(R)$. 

Now consider some invertible  element from the ring~$R$. 
For any invertible element $t \in R^*$ 
$$
h_{\alpha}(t) = x_{\alpha}(t)x_{-\alpha}(-t^{-1})x_{\alpha}(t)x_{\alpha}(1)^{-1}x_{-\alpha}(-1)^{-1}x_{\alpha}(1)^{-1}.
$$
 Therefore,  conjugation~$g$  maps the element $h_{\alpha}(t)$  to a matrix from $M_{N}(R)$.

Now let us consider an arbitrary short root $\beta$ and a long root~$\alpha$, such that
$$
\langle \beta, \alpha \rangle =1.
$$
Then
$$
x_{\beta}(t^{\langle \beta, \alpha \rangle}) = h_{\alpha}(t)x_{\beta}(1) h_{\alpha}(t)^{-1},
$$
i.\,e., $x_{\beta}(t)$ for any invertible element~$t\in R^*$ is mapped under  conjugation by~$g$  into a matrix with coefficients from~$R$.

Now let again $\beta$ be some short root. Then
$$
x_{\beta}(\pm 1) = E \pm X_{\beta}+\frac{X_{\beta}^2}{2}  \pm \frac{X_{\beta}^3}{6},
$$
hence
$$
x_{\beta}(-1)+x_{\beta}(1) = 2E+X_{\beta}^2 
\Longrightarrow X_{\beta}^2 = x_{\beta}(-1)+x_{\beta}(1)-2E. \eqno (1) 
$$
Similarly
$$
x_{\beta}(1)-x_{\beta}(-1)= 2X_{\beta}+\frac{X_{\beta}^3}{3}
\Longrightarrow \frac{X_{\beta}^3}{3} = x_{\beta}(1) - x_{\beta}(-1) - 2X_{\beta}. \eqno (2)
$$
Besides,
\begin{multline*}
x_{\beta}(2) = E + 2X_{\beta} + 2X_{\beta}^2 + \frac{4X_{\beta}^3}{3} =\\
= E + 2X_{\beta} + 2x_{\beta}(1)+2x_{\beta}(-1) - 4E + 4x_{\beta}(1) - 4x_{\beta}(-1) - 8X_{\beta} = \\ =-3E-6X_{\beta}+6x_{\beta}(1)-2x_{\beta}(-1),
\end{multline*}
therefore, using the invertibility of $3$, we obtain
$$
 2X_{\beta} = \frac{-x_{\beta}(2)-2x_{\beta}(-1)}{3}-E+2x_{\beta}(1). \eqno (3)
$$
From (2) and (3) it now follows
$$
\frac{X_{\beta}^3}{3} = -x_{\beta}(1)-\frac{x_{\beta}(-1)}{3}+\frac{x_{\beta}(2)}{3}+E.        \eqno (4)
$$
Then any $x_\beta(t)$, where $t\in 2R$, can be expressed through  $x_{\beta}(1)$ and~$t$, therefore, it also mapped into a matrix with coefficients from~$R$ under the conjugation by~$g$.

Therefore all $x_\beta(t)$, where $t$ is  either invertible, or is divided by~$2$, are mapped into  matrices with  coefficients from~$R$ under the conjugation  by~$g$.
\end{proof}

\bigskip

If (as in our case)  $R$ is generated by invertible elements and the ideal~$2R$, then 
$$
g G_{\ad} (\mathbf G_2, R) g^{-1}\subseteq M_N (R)\cap G_{\ad} (\mathbf G_2, S) =G_{\ad} (\mathbf G_2, R),
$$
Therefore
$$
g G_{\ad} (\mathbf G_2, R) g^{-1}=G_{\ad} (\mathbf G_2, R) \; 
 \text{and}  \; g E_{\ad} (\mathbf G_2, R) g^{-1}=E_{\ad} (\mathbf G_2, R).
$$

Consequently,  if we  take  composition of the initial automorphism $\varphi\in \Aut (G_{\ad}(\mathbf G_2,R))$ and conjugation $i_{g^{-1}}$ by the element $g^{-1}$,  we get some automorphism $\rho \in \Aut (G_{\ad}(\mathbf G_2,R))$, for which $\rho(x_{\alpha}(1))=x_{\alpha}(1)$ for any $\alpha\in \mathbf G_2$.

\begin{lemma}\label{ringaut}
Any automorphism $\rho\in \Aut (G_{\ad}(\mathbf G_2,R))$ (or $\rho\in \Aut (E_{\ad}(\mathbf G_2,R))$), for which  for all $\alpha \in \mathbf G_2$ $\rho (x_\alpha(1))=x_\alpha(1)$, is a ring automorphism of the Chevalley group (respectively, its elementary subgroup).
\end{lemma}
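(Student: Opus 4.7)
The plan is to construct a ring automorphism $\sigma:R\to R$ satisfying $\rho(x_\alpha(t))=x_\alpha(\sigma(t))$ for every $\alpha\in\mathbf G_2$ and every $t\in R$; since the elements $x_\alpha(t)$ generate the group, this exhibits $\rho$ as the ring automorphism induced by $\sigma$.

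First I show that $\rho$ preserves each root subgroup $X_\alpha=\{x_\alpha(t):t\in R\}$. Since $\rho$ fixes $x_\alpha(1)$ and hence, by (R1), also $x_\alpha(-1)=x_\alpha(1)^{-1}$, it fixes every $w_\alpha=x_\alpha(1)x_{-\alpha}(-1)x_\alpha(1)$. By (R5), inner conjugation by $w_\alpha$ permutes the root subgroups following the reflection along $\alpha$ (up to a sign), and since the Weyl group of $\mathbf G_2$ acts transitively on long roots and on short roots, it is enough to verify $\rho(X_\alpha)\subseteq X_\alpha$ for one long and one short root. This preservation follows from an intrinsic group-theoretic description of $X_\alpha$ inside the positive unipotent subgroup $U=\langle X_\gamma:\gamma>0\rangle$: the top root subgroup $X_{3\alpha_1+2\alpha_2}$ is precisely the centre of $U$ by (R2), and the remaining $X_\alpha$ are extracted in turn using the lower central series of $U$ together with centralizer data for the fixed generators $x_\gamma(1)$. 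Since all such constructions are preserved by any group automorphism, we obtain $\rho(X_\alpha)=X_\alpha$ for every $\alpha$.

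Once root-subgroup preservation is in hand, define $\sigma_\alpha:R\to R$ by $\rho(x_\alpha(t))=x_\alpha(\sigma_\alpha(t))$. Relation (R1) gives $\sigma_\alpha(t+u)=\sigma_\alpha(t)+\sigma_\alpha(u)$, and $\sigma_\alpha(1)=1$ by hypothesis. Applying $\rho$ to (R5) together with $\rho(w_\beta)=w_\beta$ yields $\sigma_{w_\beta(\alpha)}=\pm\sigma_\alpha$ with signs controlled by the Chevalley constants $c(\beta,\alpha)=\pm 1$; tracking these signs around each Weyl orbit collapses the family $\{\sigma_\alpha\}$ to one additive map $\sigma^\ell$ on the long roots and one $\sigma^s$ on the short roots. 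Now apply $\rho$ to the long-short commutator formula from (R2), with $\alpha$ long simple and $\beta$ short simple,
\[
[x_\alpha(t),x_\beta(u)]=x_{\alpha+\beta}(tu)\,x_{\alpha+3\beta}(-tu^3)\,x_{\alpha+2\beta}(-tu^2)\,x_{2\alpha+3\beta}(t^2u^3).
\]
Using that every element of $U$ admits a unique factorisation as an ordered product over positive roots, comparison of the two sides factor-by-factor gives, on the short $X_{\alpha+\beta}$-component, the identity $\sigma^s(tu)=\sigma^\ell(t)\sigma^s(u)$ for all $t,u\in R$. Setting $u=1$ forces $\sigma^s=\sigma^\ell$; calling this common map $\sigma$, the same identity now reads $\sigma(tu)=\sigma(t)\sigma(u)$, so $\sigma$ is a ring endomorphism. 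Applying the same argument to $\rho^{-1}$ (which also fixes every $x_\alpha(1)$) produces a two-sided inverse to $\sigma$, so $\sigma$ is a ring automorphism of $R$, and by construction $\rho$ is the ring automorphism it induces.

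The main obstacle is the first step — extracting each individual $X_\alpha$ intrinsically from the group structure. In $\mathbf G_2$ the commutator of two root subgroups is, by (R2), typically a product spread across up to four root subgroups, so the chain of commutator subgroups of $U$ has to be disentangled carefully. Here the invertibility of $3$ in $R$, already used in Lemma~\ref{generate}, is essential for solving for individual factors such as the $x_{\alpha+3\beta}(\cdot)$ term in identities like $[x_{\alpha+2\beta}(t),x_\beta(u)]=x_{\alpha+3\beta}(-3tu)$. Once $\rho$ is known to stabilise each root subgroup, the remainder of the proof is a formal consequence of the Chevalley commutator relations.
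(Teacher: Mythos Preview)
Your overall strategy after the first step is essentially the same as the paper's: once each root subgroup is known to be $\rho$-invariant, you read off maps $\sigma_\alpha$, use the Weyl elements $w_\beta$ (which are fixed by $\rho$) to identify them within each root length, and then compare both sides of the long--short commutator from (R2) to merge $\sigma^\ell$ with $\sigma^s$ and obtain multiplicativity. That part is fine and matches the paper.

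The gap is in your first step. You try to pin down $X_\alpha$ by working \emph{inside} the unipotent radical $U=\langle X_\gamma:\gamma>0\rangle$, using that $X_{3\alpha_1+2\alpha_2}$ is the centre of $U$ and then peeling off the other root subgroups via the lower central series. But nothing you have said shows that $\rho(U)=U$: the hypothesis only fixes the elements $x_\gamma(1)$, not the full subgroups $X_\gamma$, and $U$ is generated by the latter, not the former. Without $\rho$-invariance of $U$, neither its centre nor its lower central series is an invariant of $\rho$, so the argument does not get off the ground. The paper avoids this entirely: it characterizes $X_\alpha$ as the centralizer \emph{in the whole group} of the finite set $\Gamma_\alpha=\{x_\beta(1):[x_\alpha(1),x_\beta(1)]=e\}$, quoting a separate result that this centralizer equals $C\cdot X_\alpha$ (with $C$ the centre, trivial here). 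Since $\rho$ fixes every $x_\beta(1)$, it fixes $\Gamma_\alpha$ and hence its centralizer, giving $\rho(X_\alpha)=X_\alpha$ immediately.

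There is a second, smaller gap: your final sentence ``since the elements $x_\alpha(t)$ generate the group'' covers $E_{\ad}(\mathbf G_2,R)$ but not $G_{\ad}(\mathbf G_2,R)$, which in general is strictly larger. The paper closes this by composing $\rho$ with the inverse of the ring automorphism just found, obtaining an automorphism $\rho'$ of $G_{\ad}$ that is the identity on $E_{\ad}$; then for any $g\in G_{\ad}$ the element $g^{-1}\rho'(g)$ centralizes $E_{\ad}$, and the centralizer of $E_{\ad}$ in the adjoint group is trivial, so $\rho'=\mathrm{id}$.
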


\begin{proof}
For a given $\alpha\in \mathbf G_{2}$   by~$\Gamma_\alpha$  let us denote the set of all   $x_\beta(1)$ such that $[x_\alpha(1),x_\beta(1)]=e$. In the paper~\cite{Bunina_very_new} it is proved, that the centralizer of    $\Gamma_\alpha$ in the Chevalley group (or in its elementary subgroup) coincides with  $CX_\alpha$, where $C$ is the center of the Chevalley group, and  $X_\alpha=\{ x_\alpha(t)\mid t\in R\}$. In our root system $\mathbf G_{2}$ center is trivial (since the group is adjoint), so we get~$X_\alpha$.

If under the action of~$\rho$ the elements $x_\beta(1)$ are mapped into itself, so the centralizer  of any its set also is mapped into itself, so for any $t\in R$ there exists $s\in R$ such that $\rho(x_\alpha(t))=x_\alpha(s)$.

Let us show, that the obtained mapping $t\mapsto s$ does not depend of choosing a root~$\alpha\in \mathbf G_2$. Actually, it must coincide on  roots of the same length, since if $\alpha_1$ and $\alpha_2$ have the same length, there  exists an element from the Weil group  $w$ (generated only by $x_\beta(1)$) such that
$$
\forall t\in R\quad  wx_{\alpha_1}(t) w^{-1}= x_{\alpha_2}(t).
$$
Let us denote the obtained mapping  on long roots by~$\rho_1$, and on short roots by~$\rho_2$.  

Apply the automorphism $\rho$ to the expression $[x_\alpha(t), x_\beta(1)]$, where $\alpha,\beta$ are short roots of the system~$\mathbf G_2$, $\alpha$ is long, $\beta$ is short:
\begin{multline*}
\rho([x_\alpha(t), x_\beta(1)])=[\rho(x_\alpha(t)), \rho(x_\beta(1))]\Longleftrightarrow \\
\Longleftrightarrow x_{\alpha+\beta}(\rho_2(t))x_{\alpha+3\beta}(\rho_1(-t))x_{\alpha+2\beta}(\rho_2(-t))x_{2\alpha+3\beta}(\rho_1(t^2))=\\
=x_{\alpha+\beta}(\rho_1(t))x_{\alpha+3\beta}(-\rho_1(t)) x_{\alpha+2\beta}(-\rho_1(t))x_{2\alpha+3\beta}(\rho_1(t^2)),
\end{multline*}
so  $\rho_1$ and $\rho_2$ coincide.

Denote the obtained mapping also by~$\rho: R\to R$, we just need to prove that this mapping is an automorphism of~$R$.

Actually, its bijectivity follows from the fact  that the initial automorphism  $\rho$  is bijective.

Its additivity follows from the formula
\begin{multline*}
x_{\alpha}(\rho(t_1)+\rho(t_2))=x_\alpha(\rho(t_1))x_\alpha(\rho(t_2))=\rho(x_\alpha(t_1))\cdot \rho   (x_\alpha(t_2))=\\
=\rho(x_\alpha(t_1)x_\alpha(t_2))=\rho(x_\alpha(t_1+t_2)),
\end{multline*}
and multiplicativity follows from formula $(R2)$
\begin{multline*}
x_{2\alpha+3\beta}(\rho(t_1)\rho(t_2))=[x_{\alpha}(\rho(t_1)), x_{\alpha+3\beta}(\rho(t_2))]=\rho([x_{\alpha}(t_1),
x_{\alpha+3\beta}(t_2)])= \\
=\rho(x_{2\alpha+3\beta}(t_1t_2))=x_{2\alpha+3\beta}(\rho(t_1t_2))
\end{multline*}
where $\alpha,\beta$ are simple roots of $\mathbf G_2$.

Thus, $\rho$ is a ring automorphism on all $x_\alpha(t)$, $\alpha \in \mathbf G_2$, $t\in R$, therefore it is a ring automorphism of the elementary Chevalley group $E_{\ad} (\mathbf G_2, R)$. 
If an initial automorphism was considered only on the elementary subgroup, lemma is proved.

Then  consider the general Chevalley group $G_{\ad}(\mathbf G_2,R)$. If an automorphism $\rho$ coincides with ring automorphism on an elementary subgroup, so if we take the composition of it and the inverse mapping of the ring automorphism, we obtain the  automorphism $\rho'$ of the Chevalley group,  which is identical on its elementary subgroup. In this case for any $g\in G_{\ad}(\mathbf G_2,R)$ and all $x\in E_{\ad}(\mathbf G_2,R)$ we have
$$
gxg^{-1}=x' \Longrightarrow \rho'(g) x \rho'(g)^{-1}=x' \Longrightarrow \rho'(g)x\rho'(g)^{-1}=gxg^{-1} \Longrightarrow (g^{-1}\rho'(g)) x= x (g^{-1}\rho'(g)).
$$
As this equality is true for $x\in E_{\ad}(\mathbf G_2,R)$, and the centralizer of the elementary subroup of an adjoint Chevalley group is trivial, then  $g^{-1}\rho'(g)=e$ and $\rho'$  is a trivial automorphism.

Therefore, $\rho$  is a ring automorphism of the Chevalley group, and the initial automorphism $\varphi$ is a composition of a ring and inner automorphisms, what was required. 

\end{proof}

\bigskip

\end{document}